\newcounter{alphthm}
\theoremstyle{plain}
\newtheorem{thm}{Theorem}[section]
 \newtheorem{exam}[thm]{Example}
 \theoremstyle{definition}
 \newtheorem{defn}[thm]{Definition}
 \theoremstyle{remark}
 \newtheorem{rem}[thm]{\bf Remark}
 \numberwithin{equation}{subsection}
\begin{document}

\title {A Fixed-Point Theorem For Mapping Satisfying
a General Contractive Condition Of Integral Type Depended an
Another Function \footnote{2000 {\it Mathematics Subject
Classification}:
 Primary 46J10, 46J15, 47H10.} }

\author{ S. Moradi\footnote{First author}  and A. Beiranvand \\\\
Faculty  of Science, Department of Mathematics\\
Arak University, Arak,  Iran\\
\date{}
 }
 \maketitle
\begin{abstract}
In this paper, we study the existence of fixed points for
 mappings defined on complete  metric space ($X,d$)
 satisfying a general contractive inequality of integral type depended
 on another function. This conditions is analogous of Banach
 conditions and Branciari Theorem.

\end{abstract}

\textbf{Keywords:} Fixed point, contraction mapping, contractive
mapping, sequently convergent, subsequently convergent, integral
type.

\section{Introduction}

The first important result on fixed points for contractive-type
 mapping was the well-known Banach's Contraction Principle
 appeared in explicit form in Banach's thesis in 1922, where it was
 used to establish the existence of a solution for an integral
 equation [1]. In the general setting of complete metric space this
  theorem runs as follows(see[5,Theorem 2.1]\\
  or[10,Theorem1.2.2]).

\begin{thm}$($\textbf{Banach's Contraction Principle}$)$
 Let $(X,d)$ be a complete metric space and $f:X \longrightarrow
 X$ be a contraction $($there exists $k \in (0,1)$ such that for
 each $x,y \in X$; $d(fx,fy) \leq kd(x,y)$$)$. Then $f$ has a unique
 fixed point in $X$, and for each $x_{0} \in X$ the sequence of
 iterates $\{f^{n}x_{0}\}$ converges to this fixed point.

\end{thm}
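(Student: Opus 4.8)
The plan is to use the standard iteration argument. Fix an arbitrary $x_0 \in X$ and define the Picard sequence $x_{n+1} = fx_n$ for $n \ge 0$. The first step is to control consecutive distances: applying the contraction condition repeatedly gives $d(x_{n+1},x_n) = d(fx_n, fx_{n-1}) \le k\, d(x_n, x_{n-1})$, and hence by induction $d(x_{n+1},x_n) \le k^n d(x_1,x_0)$.

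Next I would show $\{x_n\}$ is Cauchy. For $m > n$, the triangle inequality yields $d(x_m,x_n) \le \sum_{j=n}^{m-1} d(x_{j+1},x_j) \le \left(\sum_{j=n}^{m-1} k^j\right) d(x_1,x_0) \le \dfrac{k^n}{1-k}\, d(x_1,x_0)$, and since $0 < k < 1$ the right-hand side tends to $0$ as $n \to \infty$. By completeness of $(X,d)$, there is $x^* \in X$ with $x_n \to x^*$.

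Then I would verify that $x^*$ is a fixed point. The contraction condition makes $f$ Lipschitz, hence continuous, so $fx^* = f(\lim x_n) = \lim f x_n = \lim x_{n+1} = x^*$; alternatively one estimates $d(fx^*,x^*) \le d(fx^*, x_{n+1}) + d(x_{n+1},x^*) \le k\,d(x^*,x_n) + d(x_{n+1},x^*) \to 0$. Finally, for uniqueness, suppose $x^*$ and $y^*$ are both fixed points; then $d(x^*,y^*) = d(fx^*,fy^*) \le k\, d(x^*,y^*)$, which forces $(1-k)d(x^*,y^*) \le 0$ and therefore $d(x^*,y^*) = 0$, i.e. $x^* = y^*$. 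Since the starting point $x_0$ was arbitrary, every iterate sequence converges to this unique fixed point.

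There is no serious obstacle here — this is the textbook argument — but the step requiring the most care is the Cauchy estimate, where one must handle the geometric tail uniformly in $m$ so that the bound depends only on $n$; everything else follows routinely once convergence of the iterates is established.
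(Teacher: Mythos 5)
Your proof is correct and is the standard Picard-iteration argument: geometric decay of consecutive distances, the uniform tail estimate $d(x_m,x_n)\le \frac{k^n}{1-k}d(x_1,x_0)$, completeness, continuity of $f$ to identify the limit as a fixed point, and the one-line uniqueness argument. The paper does not actually prove Theorem 1.1 directly; it states it as a classical result with references, and then (Remark 3.1) recovers it as the special case $Tx=x$, $\phi\equiv 1$ of the main Theorem 2.5, whose proof runs the analogous iteration for $\{Tf^nx\}$ under the integral-type condition and needs extra steps (a separate boundedness argument for the orbit, continuity of $f$ deduced from the subsequential convergence of $T$, and the injectivity of $T$ to pass from $Tfb=Tb$ to $fb=b$). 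Your direct route is shorter and more elementary precisely because all of that auxiliary machinery collapses when $T$ is the identity and $\phi\equiv 1$: the Cauchy estimate follows immediately from the geometric series, and no boundedness or subsequence extraction is needed. So the two approaches agree in spirit (both are Picard iteration plus a Cauchy estimate), but yours is the self-contained classical proof, while the paper's is a derivation from a strictly more general theorem.
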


After this classical result Kannan in [4] analyzed a substantially
new type of contractive condition. Since then there have been many
theorems dealing with mappings satisfying various types of
contractive inequalities. Such conditions involve linear and
nonlinear expressions (rational, irrational, and of general type).
The intrested reader who wants to know more about this matter is
recommended to go deep into the survey articles by Rhoades [7,8,9]
and Meszaros [6], and into the references therein. Another result
on fixed points for contractive-type mapping is generally
attributed to Edelstein (1962) who actually obtained slightly more
general versions. In the general setting of compact metric spaces
this result runs as followes (see [5, Theorem 2.2]).

\begin{thm}
 Let $(X,d)$ be a compact metric space and $f:X \longrightarrow
 X$ be a contractive $($for every $x,y \in X$ such that $x \neq y$; $d(fx,fy) < d(x,y)$$)$. Then $f$ has a unique
 fixed point in $X$, and for any $x_{0} \in X$ the sequence of
 iterates $\{f^{n}x_{0}\}$ converges to this fixed point.

\end{thm}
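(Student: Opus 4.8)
The plan is to run the classical three-part argument: uniqueness, then existence via a minimization, then convergence of the iterates using compactness in an essential way.

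\textbf{Uniqueness.} I would first dispose of uniqueness for free: if $p\neq q$ were two fixed points, then $d(p,q)=d(fp,fq)<d(p,q)$, which is impossible. So $f$ has at most one fixed point.

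\textbf{Existence.} Next I would record the elementary observation that a contractive map is continuous — indeed $d(fx,fy)<d(x,y)$ makes $f$ nonexpansive, hence continuous — so the real-valued map $x\mapsto g(x):=d(x,fx)$ is continuous on $X$. Since $X$ is compact, $g$ attains its infimum at some $p\in X$. If $fp\neq p$, applying the contractive inequality to the pair $p,\,fp$ gives $g(fp)=d(fp,f(fp))<d(p,fp)=g(p)$, contradicting minimality; hence $fp=p$, and combined with the previous step $p$ is the unique fixed point.

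\textbf{Convergence.} Finally, fix $x_0\in X$ and put $x_n=f^nx_0$. If $x_N=p$ for some $N$ the iterates are eventually constant and we are done, so assume $x_n\neq p$ for all $n$. Then $d(x_{n+1},p)=d(fx_n,fp)<d(x_n,p)$, so $c_n:=d(x_n,p)$ is strictly decreasing and bounded below, hence $c_n\to c$ for some $c\ge 0$. By compactness some subsequence $x_{n_k}\to y$; continuity of $d(\cdot,p)$ gives $d(y,p)=c$, while continuity of $f$ and of $d(\cdot,p)$, together with the fact that the \emph{whole} sequence $(c_n)$ converges to $c$, gives $d(fy,p)=\lim_k d(x_{n_k+1},p)=c$. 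If $y\neq p$ this forces $c=d(fy,fp)<d(y,p)=c$, a contradiction; hence $y=p$ and $c=0$, i.e. $d(x_n,p)\to 0$.

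\textbf{Main obstacle.} The delicate point is the convergence step. Unlike Banach's principle, a contractive map need not contract by a uniform factor $k<1$, so one cannot conclude $c_n\to 0$ from a geometric estimate; compactness is what rescues the argument, by producing a convergent subsequence whose limit is forced by the contractive inequality to be the fixed point. A little care is also needed when passing from the subsequential identity $d(fy,p)=c$ back to the limit $c$ of the full monotone sequence $(c_n)$.
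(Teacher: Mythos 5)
Your proof is correct and complete: the uniqueness argument, the minimization of $x\mapsto d(x,fx)$ over the compact space, and the subsequence argument forcing the monotone limit $c$ to be $0$ are all sound, and you correctly isolate the one delicate point (passing between the full monotone sequence and the subsequence). Note that the paper itself offers no proof of this statement — it is quoted as Edelstein's theorem with a reference to [5, Theorem 2.2] — so there is nothing to compare against; what you have written is the standard and canonical argument for this result.
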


Also in 2002 in [3]  A. Branciari analyzed the existence of fixed
point for mapping $f$ defined on a complete metric space $(X,d)$
satisfying a contractive condition of integral type.(see the
following theorem).

\begin{thm}
Let $(X,d)$ be a complete metric space, $ \alpha \in (0,1)$ and
$f:X \longrightarrow X$ be a mapping such that for each $x,y \in
X$, $\int_0^{d(fx,fy)} \phi(t)  dt \leq \alpha{\int _0^{d(x,y)}
\phi(t)  dt}$, where $\phi: [0,+\infty)\longrightarrow
[0,+\infty)$ is a Lebesgue-integrable mapping which is summable
(i.e., with finite integral) on each compact subset of
$[0,+\infty)$, nonnegative, and such that for each $\epsilon > 0
,\int _0^\epsilon \phi(t) dt > 0$; then $f$ has a unique fixed
point $a\in X$ such that for each $x\in X$, $\underset
{n\rightarrow\infty} \lim f^{n} x=a$.

\end{thm}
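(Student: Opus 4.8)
The plan is to mimic the proof of Banach's Contraction Principle, with the metric $d$ replaced throughout by the auxiliary function $\Phi(s):=\int_0^s\phi(t)\,dt$. So first I would record the properties of $\Phi$ that make this work: it is well defined and finite on $[0,+\infty)$ (since $\phi$ is summable on compact sets), nondecreasing (since $\phi\ge 0$), continuous (an indefinite Lebesgue integral of a locally integrable function is absolutely continuous on bounded intervals), $\Phi(0)=0$, and $\Phi(s)>0$ for every $s>0$ — this last point is exactly the hypothesis $\int_0^\epsilon\phi>0$. A small lemma, used several times, follows at once: if $(s_n)$ is a sequence of nonnegative reals with $\Phi(s_n)\to 0$, then $s_n\to 0$ (otherwise a subsequence stays $\ge\epsilon>0$, and then $\Phi(s_n)\ge\Phi(\epsilon)>0$ along it). In this language the contractive hypothesis reads $\Phi(d(fx,fy))\le\alpha\,\Phi(d(x,y))$ for all $x,y\in X$.

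Next, fix $x_0\in X$, set $x_n=f^nx_0$ and $d_n=d(x_n,x_{n+1})$. Iterating the inequality gives $\Phi(d_n)\le\alpha\,\Phi(d_{n-1})\le\cdots\le\alpha^n\Phi(d_0)\to 0$, hence $d_n\to 0$ by the lemma above. The crux of the argument, and the step I expect to require the most care, is proving that $(x_n)$ is Cauchy: unlike the Banach setting there is no summable bound $\sum d_n<\infty$ available, since controlling $\Phi(d_n)$ does not control $d_n$ in any linear way. I would argue by contradiction. If $(x_n)$ is not Cauchy, there exist $\epsilon>0$ and indices $m_k>n_k\to\infty$ with $d(x_{m_k},x_{n_k})\ge\epsilon$; choosing $m_k$ minimal with this property we also have $d(x_{m_k-1},x_{n_k})<\epsilon$. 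Then the triangle inequality gives $\epsilon\le d(x_{m_k},x_{n_k})\le d_{m_k-1}+d(x_{m_k-1},x_{n_k})<d_{m_k-1}+\epsilon$, so $d(x_{m_k},x_{n_k})\to\epsilon$; and since $|d(x_{m_k+1},x_{n_k+1})-d(x_{m_k},x_{n_k})|\le d_{m_k}+d_{n_k}\to 0$, also $d(x_{m_k+1},x_{n_k+1})\to\epsilon$. Applying the continuous function $\Phi$ to the contractive inequality $\Phi(d(x_{m_k+1},x_{n_k+1}))\le\alpha\,\Phi(d(x_{m_k},x_{n_k}))$ and letting $k\to\infty$ yields $\Phi(\epsilon)\le\alpha\,\Phi(\epsilon)$, which is impossible because $\Phi(\epsilon)>0$ and $\alpha<1$. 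Hence $(x_n)$ is Cauchy.

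By completeness, $x_n\to a$ for some $a\in X$. To see $fa=a$, note $\Phi(d(fa,fx_n))\le\alpha\,\Phi(d(a,x_n))\to\alpha\Phi(0)=0$ by continuity of $\Phi$, so $d(fa,fx_n)\to 0$ by the lemma, and then $d(fa,a)\le d(fa,fx_n)+d(x_{n+1},a)\to 0$. For uniqueness: if $fa=a$ and $fb=b$, then $\Phi(d(a,b))=\Phi(d(fa,fb))\le\alpha\,\Phi(d(a,b))$, forcing $\Phi(d(a,b))=0$, i.e. $a=b$. Finally, for an arbitrary $x\in X$ the same reasoning applied with $x_0=x$ shows that $\{f^nx\}$ is Cauchy and hence converges; its limit is a fixed point by the argument just given, and by uniqueness it must be $a$. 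Thus $\lim_{n\to\infty}f^nx=a$ for every $x\in X$. The only genuinely delicate point in the whole proof is the Cauchy step — specifically the minimal-index construction combined with the essential use of the continuity of $\Phi$.
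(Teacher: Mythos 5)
Your proof is correct. Note first that the paper does not actually prove this statement (Theorem 1.3) directly: it is quoted from Branciari and then recovered as the special case $T=\mathrm{id}$ of Theorem 2.5, so the relevant comparison is with the proof of Theorem 2.5 specialized to the identity. The two arguments share the same skeleton --- iterate, show $\Phi(d(f^{n+1}x,f^{n}x))\le\alpha^{n}\Phi(d(fx,x))\to 0$ and hence $d(f^{n+1}x,f^{n}x)\to 0$, establish Cauchyness, pass to the limit, prove uniqueness --- but they organize the one delicate step, Cauchyness, differently. The paper first proves that the orbit is \emph{bounded}, by contradiction: if not, it picks $n_{k+1}$ minimal with $d(f^{n_{k+1}}x,f^{n_k}x)>1$, shows that this distance and $d(f^{n_{k+1}}x,f^{n_k+1}x)$ both tend to $1$, and derives $\int_0^1\phi\le\alpha\int_0^1\phi$, a contradiction; with boundedness in hand, Cauchyness is then immediate from $\Phi(d(f^{m}x,f^{n}x))\le\alpha^{n}\Phi(d(f^{m-n}x,x))\le\alpha^{n}\Phi(M)\to 0$. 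You instead negate Cauchyness directly and run the minimal-index construction at the threshold $\epsilon$ rather than at $1$, arriving at $\Phi(\epsilon)\le\alpha\Phi(\epsilon)$. Both routes hinge on exactly the same two properties of $\Phi$ (monotonicity together with $\Phi(\epsilon)>0$, and continuity at the limiting value of the distances); yours is slightly more economical, since it needs neither a separate boundedness lemma nor the telescoped inequality $\Phi(d(f^{m}x,f^{n}x))\le\alpha^{n}\Phi(d(f^{m-n}x,x))$, while the paper's version isolates boundedness of the orbit as a reusable intermediate fact. Your handling of the fixed-point step (via $\Phi(d(fa,fx_n))\to 0$, rather than the paper's separate argument that $f$ is continuous) and of the convergence of every orbit is a minor, equally valid variation.
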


%%% -------------------------------------------------------------------
The aim of this paper is to study the existence of fixed point for
mapping $f$ defined on a compact metric space$(X,d)$ such that is
$T_{\int \phi}-contraction$. In particular, we extend the main
theorem due to A. Branciari [3] (Theorem 1.3) and the main theorem
in [2] (2008). First we introduce the $T_{\int \phi}-contraction$
function and then extended the A.Branciari Theorem and the main
theorem in [2] and Banach-contraction principle, by the same
metod for proof of the A. Branciari Theorem. At the end of paper
some examples and applications concerning this kind of
contractions. In [3] A. Branciari gave an example (Example 3.6)
such that we can conclude this example by theorem 1.2. (because
$X=\{1/ n :n\in \Bbb N\}\bigcup \{0\} $, with metric induced by $
\Bbb R$, $ d(x,y)=|x-y|$, is a compact metric space and $f$ is a
contractive mapping). In the end of this paper we give an example
(Example 3.5) such that we can not conclude this example by
Theorem 1.1, Theorem 1.2. Branciari Theorem and the main theorem
in [2], but we can conclude this example by the main theorem
(Theorem 2.5 ) in this paper. In the sequel, $\Bbb N$ will
represent the set of natural numbers, $\Bbb R$ the set of real
number and $\Bbb R^{+}$ the set of nonnegative real number.

%-------------------------------------------------------------------------------------------

\section{Definitions and Main Result}

The following theorem (Theorem 2.5) is the main result of this
paper. In the first, we define some new definitions.

\begin{defn}
 Let $(X,d)$ be a metric space and $f,T:X \longrightarrow X$ be
 two functions and  $\phi:  [0,+\infty) \longrightarrow
[0,+\infty)$ be a Lebesgue-integrable mapping. A mapping $f$ is
said to be a $T_{\int \phi}-contraction$ if there exists $\alpha
\in (0,1)$ such that for all $x,y \in X $\\
$$ \int_0^{d(Tfx,Tfy)} \phi(t)  dt \leq \alpha{\int
_0^{d(Tx,Ty)} \phi(t)  dt}$$
\end{defn}
\begin{rem}
By taking $Tx=x$ and $\phi=1$, $T_{\int \phi}-contraction$ and
contraction are equivalent. Also by taking $Tx=x$ we can define
$\int \phi-contraction$.
\end{rem}
\begin{exam}
Let $X=[1,+\infty)$ with metric induced by $\Bbb{R}$:
$d(x,y)=|x-y|$. We consider two mappings $T,f:X \longrightarrow X$
by $Tx=\frac{1}{x}+1$ and $fx=2x$. Obviously $f$ is not
contraction but $f$ is $T_{\int{1}} -contraction$.

\end{exam}

\begin{defn}$[2]$
 Let $(X,d)$ be a metric space. A mapping $T:X \longrightarrow X$
 is said sequentially convergent if we have, for every sequence
 $\{y_{n}\}$, if $\{Ty_{n}\}$ is convergence then $\{y_{n}\}$ also is
 convergence. $T$ is said subsequentially convergent if we have, for
 every sequence $\{y_{n}\}$, if $\{Ty_{n}\}$ is convergence then
 $\{y_{n}\}$ has a convergent subsequence.
\end{defn}

\begin{thm}$[$Main theorem $]$
 Let $(X,d)$ be a complete metric space, $\alpha \in (0,1)$,  $T,f:X \longrightarrow
 X$ be mapping such that $T$ is continuous,  one-to-one and  subsequentially convergent and
 $f$ is $T_{\int \phi} -contraction$ where $\phi: [0,+\infty) \longrightarrow
[0,+\infty)$ is a Lebesgue-integrable mapping which is summable on
each compact subset of $[0,+\infty)$, nonnegative and such that
for each $\epsilon > 0,  \int _0^\epsilon \phi(t) dt >0 $; then
$f$ has a unique fixed point $a\in X$. Also if $T$ is sequentially
convergent, then for each $x_0\in X$, the sequence of iterates
$\{f^nx_{0} \}$ converges to this fixed point.

\end{thm}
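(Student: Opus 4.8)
The plan is to mirror Branciari's argument, but carried out on the $T$-side. Fix $x_0 \in X$ and set $x_{n+1} = f x_n = f^{n+1} x_0$. The key quantity to track is $d(T x_n, T x_{n+1})$. Define $\varphi(s) = \int_0^s \phi(t)\,dt$. The $T_{\int\phi}$-contraction condition gives $\varphi\big(d(Tx_{n+1}, Tx_{n+2})\big) \le \alpha\,\varphi\big(d(Tx_n, Tx_{n+1})\big)$, hence $\varphi\big(d(Tx_n, Tx_{n+1})\big) \le \alpha^n \varphi\big(d(Tx_0, Tx_1)\big) \to 0$. From the hypothesis that $\int_0^\epsilon \phi > 0$ for all $\epsilon > 0$, one deduces $d(Tx_n, Tx_{n+1}) \to 0$ (otherwise some subsequence stays $\ge \epsilon$, forcing $\varphi(d(Tx_n,Tx_{n+1})) \ge \varphi(\epsilon) > 0$, a contradiction).

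**Showing $\{Tx_n\}$ is Cauchy.**

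This is the heart of the argument and the step I expect to be the main obstacle — exactly as in Branciari's original paper, where the Cauchy property requires a somewhat delicate argument. I would argue by contradiction: if $\{Tx_n\}$ is not Cauchy, there is $\epsilon > 0$ and subsequences $\{Tx_{m(k)}\}$, $\{Tx_{n(k)}\}$ with $m(k) > n(k) \ge k$, $d(Tx_{m(k)}, Tx_{n(k)}) \ge \epsilon$, and $m(k)$ chosen minimal with this property so that $d(Tx_{m(k)-1}, Tx_{n(k)}) < \epsilon$. Using the triangle inequality together with $d(Tx_n, Tx_{n+1}) \to 0$, one shows $d(Tx_{m(k)}, Tx_{n(k)}) \to \epsilon$, and similarly $d(Tx_{m(k)+1}, Tx_{n(k)+1}) \to \epsilon$. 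Applying $\varphi$ and the contraction inequality, $\varphi\big(d(Tx_{m(k)+1}, Tx_{n(k)+1})\big) \le \alpha\,\varphi\big(d(Tx_{m(k)}, Tx_{n(k)})\big)$; passing to the limit (here one needs continuity/semicontinuity properties of $\varphi$ that follow from $\phi$ being summable on compacts, so $\varphi$ is continuous) yields $\varphi(\epsilon) \le \alpha\,\varphi(\epsilon)$, and since $\varphi(\epsilon) > 0$ this contradicts $\alpha \in (0,1)$. Hence $\{Tx_n\}$ is Cauchy.

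**Producing and identifying the fixed point.**

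Since $(X,d)$ is complete, $\{Tx_n\} = \{Tf^n x_0\}$ converges to some $z \in X$. Because $T$ is subsequentially convergent, $\{f^n x_0\}$ has a subsequence $\{f^{n(k)} x_0\}$ converging to some $a \in X$. By continuity of $T$, $T f^{n(k)} x_0 \to Ta$, so $Ta = z$. Now I would show $fa = a$: apply the contraction inequality to the pair $(a, f^{n(k)} x_0)$ to get $\varphi\big(d(Tfa, Tf^{n(k)+1}x_0)\big) \le \alpha\,\varphi\big(d(Ta, Tf^{n(k)}x_0)\big) \to 0$, so $d(Tfa, Tx_{n(k)+1}) \to 0$; but $Tx_{n(k)+1} \to z = Ta$, whence $Tfa = Ta$, and injectivity of $T$ gives $fa = a$. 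Uniqueness: if $fa = a$ and $fb = b$ then $\varphi(d(Ta,Tb)) = \varphi(d(Tfa,Tfb)) \le \alpha\,\varphi(d(Ta,Tb))$, forcing $\varphi(d(Ta,Tb)) = 0$, hence $d(Ta,Tb) = 0$ (again by the $\int_0^\epsilon \phi > 0$ property), so $Ta = Tb$ and $a = b$ by injectivity. Finally, if $T$ is sequentially convergent, then from $Tf^n x_0 \to z$ the full sequence $\{f^n x_0\}$ converges; its limit must be a point whose image under $T$ is $z = Ta$, so by injectivity the limit is $a$, giving $f^n x_0 \to a$ for every $x_0$.
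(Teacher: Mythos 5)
Your proposal is correct, and its overall skeleton matches the paper's (iterate $f$, show $\{Tf^nx_0\}$ is Cauchy, then use subsequential convergence, continuity and injectivity of $T$ to extract and identify the fixed point), but the two technically substantive steps are handled by genuinely different arguments. For the Cauchy property, the paper first proves that $\{Tf^nx_0\}$ is bounded (by a minimality/contradiction argument at scale $1$) and then telescopes the contraction inequality to get $\int_0^{d(Tf^mx,Tf^nx)}\phi \le \alpha^n\int_0^{d(Tf^{m-n}x,Tx)}\phi \le \alpha^n\int_0^M\phi \to 0$; you instead run the standard ``minimal $m(k)$'' non-Cauchy contradiction directly at scale $\epsilon$, concluding $\varphi(\epsilon)\le\alpha\varphi(\epsilon)$. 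Both are valid; yours avoids the boundedness lemma but uses continuity of $\varphi(s)=\int_0^s\phi$ at an arbitrary $\epsilon$ (which does follow from local summability of $\phi$), while the paper's version only needs the bound $\int_0^M\phi<\infty$ plus a similar limit at scale $1$ inside its boundedness step. For identifying the fixed point, the paper first proves as a separate lemma that $f$ is continuous (its Step 2, again via subsequential convergence and injectivity of $T$) and then passes to the limit in $f^{n_k+1}x_0=f(f^{n_k}x_0)$; you bypass this entirely by applying the contraction inequality to the pair $(a, f^{n(k)}x_0)$ and letting the positivity hypothesis $\int_0^\epsilon\phi>0$ force $Tfa=Ta$. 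Your route is somewhat leaner; the paper's yields the continuity of $f$ as a by-product. The uniqueness argument and the sequentially-convergent refinement are essentially identical in both.
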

\begin{proof}

\textbf{STEP 1}. Let $\alpha \in (0,1)$ such that for all $x,y\in
X$
\\ \\
$$\int_0^{d(Tfx,Tfy)} \phi(t)  dt \leq \alpha{\int _0^{d(Tx,Ty)}
\phi(t)  dt}. \hspace{2cm} (2.1)$$\\
  So if for $a,b> 0$, $\int_0^a \phi(t)  dt \leq \alpha{\int _0^b\phi(t)
dt}$ then $a<b.$\\ \\
\textbf{STEP 2}. We show that $f$ is a continuouse mapping.\\
 If $\underset {n\rightarrow \infty} \lim x_n=x$
 then by $\int_0^{d(Tfx_n , Tfx)} \phi(t)  dt \leq \alpha{\int _0^{d(Tx_n ,
Tx)}\phi(t) dt}$ and $\underset {n\rightarrow \infty}  \lim
d(Tx_n , Tx)=0$, we conclude that:\\
$$\lim _{n\rightarrow \infty}d(Tfx_n, Tfx)=0.$$ Since $T$ is
subsequentially convergent, $\{fx_n\}$ has a subsequence such
 $\{{fx_n}_k\}_{k=1}^\infty$  converge to a  $y\in X$. So
 $d(Ty,Tfx)=0$. Since $T$ is one-to-one, $y=fx$. Hence,
$\{fx_n\}$ has a subsequence converge to $fx$. \\
Therefore for every sequence $\{x_n\}$ converge to $x$, the
sequence $\{fx_n\}$ has a subsequence converge to $fx$. This shows
that $f$ is continuouse at $x$.\\ \\
\textbf{STEP 3}. Since (2.1)
is holds, for all $ n \in \Bbb N:$
   $$ \int _0^{d(Tf^{n+1} x , Tf^n x)} \phi(t) dt \leq
 \alpha^n {\int _0^{d(Tfx , Tx) }\phi(t) dt}\qquad \forall x\in X .$$
 As a consequence, since  $\alpha \in (0,1)$, we further have
           $$\int_0^{d(Tf^{n+1}x , Tf^nx)} \phi(t) dt \rightarrow  0^+ \hspace{0.5cm}
           as \hspace{0.5cm}
 n\rightarrow \infty \hspace {2cm} (2.2) $$
  Since\\
  $$ \hspace{0.5cm} \int _0^{\epsilon} \phi(t) dt > 0, \hspace{0.5cm}
\forall \epsilon> 0 \hspace{2cm}  (2.3)$$
is holds we conclude that\\
            $$\lim _{n\rightarrow \infty} d(Tf^{n+1}x , Tf^nx)=0
            \hspace{3cm}(2.4)$$
\textbf{Step 4}.  $\{Tf^nx\}$  is a bounded sequence. \\
If  $\{Tf^nx\} _{n=1}^\infty$is not a bounded sequence then, we
choose the sequence $\{n_k\}_{k=1}^\infty$  such that  $n_1=1$ and
for each $k\in \Bbb {N }$, $n_{k+1}$  is "minimal"  in the sense
that\\
  $$d(Tf^{n_{k+1}}x , Tf^{n_k}x) > 1.$$  \\
 So,
\begin{eqnarray*}
  1
  &<& d(Tf^{n_{k+1}}x  , Tf^{n_k}x)\\
  &\leq& d(Tf^{n_{k+1}}x , Tf^{n_{k+1}-1}x)  +d(Tf^{n_{k+1}-1}x , Tf^{n_k}x) \\
  &\leq& d(Tf^{n_{k+1}}x ,  Tf^{n_{k+1}-1}x)+1. \hspace{2.5cm} (2.5)
\end{eqnarray*}
  Hence, by (2.4) and (2.5) we conclude that
  $$d(Tf^{n_{k+1}}x , Tf^{n_k}x) \rightarrow 1 \hspace{0.5cm} as \hspace{0.5cm} k\rightarrow
  \infty  \hspace{2cm}   (2.6)$$\\
Also by step 1, \\
 $$d(Tf^{n_{k+1}}x , Tf^{n_k+1}x) \leq
  d(Tf^{n_{k+1}-1}x , Tf^{n_k}x). $$
Therefore,
\begin{eqnarray*}
 1-d(Tf^{n_k+1}x , Tf^{n_k}x)
  &<&d(Tf^{n_{k+1}}x ,Tf^{n_k}x)-d(Tf^{n_k+1}x , Tf^{n_k}x)\\
  &\leq&d(Tf^{n_{k+1}}x , Tf^{n_k+1}x)\\
 &\leq& d(Tf^{n_{k+1}-1}x,Tf^{n_k}x)\\
 &\leq& 1.
\end{eqnarray*}
 Hence, by (2.4),
\[d(Tf^{n_{k+1}}x , Tf^{n_k+1}x) \rightarrow
1\hspace{0.5cm}as\hspace{0.5cm}k\rightarrow \infty. \hspace{2cm}
(2.7)\] Therefore,
\begin{eqnarray*}
\int_0^{d(Tf^{n_{k+1}}x , Tf^{n_k+1}x)} \phi(t) dt
 &\leq &\alpha{\int_0^{d(Tf^{n_{k+1}-1}x , Tf^{n_{k}}x)}\phi(t) dt}\\
 &\leq& \alpha{\int_0^{1} \phi(t) dt}. \hspace{2cm} (2.8)
\end{eqnarray*}
By (2.7) and (2.8) we conclude that\\
\begin{eqnarray*}
\int_0^1 \phi(t) dt & =& \lim_{k\rightarrow \infty}
\int_0^{d(Tf^{n_{k+1}}x,
Tf^{n_k+1}x)} \phi(t) dt\\
&\leq& \alpha{\int_0^1 \phi(t) dt} .\\
\end{eqnarray*}
 So $\int_0^1\phi(t) dt=0$
and this is contradiction.\\
\\ \\
\\ \\
 \textbf{STEP 5}. By (2.1) for every $m,n\in
\Bbb{N} (m > n) $,

\[ \int_0^{d(Tf^{m}x , Tf^{n}x)} \phi(t) dt  \leq
\alpha^{n}{\int_0^{d(Tf^{m-n}x , Tx)}\phi(t) dt}. \hspace{2cm}
(2.9)\] By step 4, (2.9) and $\alpha\in (0,1),$
$$\underset {m,n\rightarrow\infty}\lim \int_0^{d(Tf^{m}x , Tf^{n}x)}
=0\hspace{2cm} (2.10)$$ Since (2.3) is hold $\underset
{m,n\rightarrow \infty}\lim d(Tf^{m}x ,Tf^{n}x)=0$, and this
shows that $\{Tf^{n}x\}_{n=1}^\infty$ is a
Cauchy sequence. Hence there exists $a\in X$ such that\\
$$\underset {n \rightarrow \infty} \lim  Tf^{n}x=a \hspace{2cm}(2.11)$$
\textbf{STEP 6}. Since $T$ is a subsequentially convergent,
$\{f^{n}x\}$ has a convergent subsequence. \\
So there exists $b\in X$ and $\{n_k\} _{k=1}^\infty$ such that
 $\underset {k \rightarrow \infty}\lim  f^{n_k}x=b$. Since $T$ is continuouse
$\underset {k\rightarrow \infty}\lim  Tf^{n_k}x=Tb$, and by (2.11)
we conclude that\\
$$ Tb=a. \hspace{2cm}(2.12)$$
Since $f$ is continuouse (step 2) and $\underset {k \rightarrow
\infty}\lim f^{n_k}x=b , \underset {k \rightarrow \infty}\lim
f^{n_{k}+1}x=fb$ and so
$\underset {k \rightarrow \infty} \lim  Tf^{n_k+1}x=Tfb. $\\
Again by (2.11) we have $$\underset {k \rightarrow \infty}\lim
Tf^{n_k+1}x=a$$ and therefore, $Tfb=a.$ So by (2.12), $Tfb=Tb.$
Since $T$ is one-to
one, $fb=b.$ Therefore $f$ has a fixed point.\\
\textbf{STEP 7}. Since $T$ is one-to-one and $f$ is $T_{\int
\phi}-contraction $, $f$ has a unique fixed point.
\end{proof}
% ------------------------------------------------------------------------

\section{Examples and Applications}
In this section, we give some applications and some examples
concerning these contractive mapping of integral type, which
clarify the connection between our result and the classical
ones.\\
\begin{rem}
Theorem 2.5 is a generalization of the Banach's contraction
principle (Theorem 1.1), letting $\phi(t)=1$ for each $t \geq 0$
and $Tx=x$ for each $x\in X$ in Theorem 2.5, we have \\
\begin{eqnarray*}
\int_0^{d(Tfx , Tfy)}\phi(t) dt &=&d(fx , fy) \\
&\leq& \alpha d(x,y)\\
&=& \alpha{\int _0^{d(Tx , Ty)} \phi(t) dt}\\
\end{eqnarray*}
\end{rem}
\begin{rem}
Theorem 2.5 is a generalization of the A. Branciari theorem
(Theorem
1.3), letting $Tx=x$ for each $x\in X$ in Theorem 2.5, so \\
\begin{eqnarray*}
\int_0^{d(Tfx , Tfy)} \phi(t) dt
 &=& \int_0^{d(fx , fy)}\phi(t) dt\\
 &\leq&  \alpha {\int_0^{d(x,y)} \phi(t) dt }\\
 &=& \alpha{\int_0^{d(Tx , Ty)} \phi(t) dt}.
\end{eqnarray*}
\end{rem}
We can conclude the following theorem ( the main Theorem in [2])
by Theorem 2.5. \\

\begin{thm}
Let $(X,d)$ be a complete metric space and $ T:X\longrightarrow X$
be a one-to-one, continuouse and subsequentially convergent
mapping. Then for every $T-contraction$ function
$f:X\longrightarrow X$, $f$ has a unique fixed point. Also if $T$
is sequentially convergent, then for each $x_{0}\in X$, the
sequence of iterates $\{f^{n}x\}$ converges to this fixed point.
($f:X\longrightarrow X$ is $T-contraction$ if there exist $\alpha
\in (0,1)$ such that for all $x,y\in X $\\
$$d(Tfx , Tfy) \leq \alpha {d(Tx , Ty)}.)$$
\end{thm}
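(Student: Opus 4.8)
The plan is to derive Theorem 3.3 as a direct special case of the Main Theorem (Theorem 2.5). The key observation is that a $T$-contraction in the sense of Theorem 3.3 is precisely a $T_{\int\phi}$-contraction for the particular choice $\phi(t)=1$ for all $t\geq 0$. First I would verify that this $\phi$ satisfies all the hypotheses required in Theorem 2.5: it maps $[0,+\infty)$ into $[0,+\infty)$, it is Lebesgue-integrable and summable on each compact subset of $[0,+\infty)$ (since on $[0,R]$ its integral is just $R<\infty$), it is nonnegative, and for each $\epsilon>0$ we have $\int_0^\epsilon \phi(t)\,dt=\epsilon>0$. So this $\phi$ is admissible.

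Next I would translate the contraction inequality. For $\phi\equiv 1$ one has $\int_0^{d(Tfx,Tfy)}\phi(t)\,dt = d(Tfx,Tfy)$ and likewise $\int_0^{d(Tx,Ty)}\phi(t)\,dt = d(Tx,Ty)$. Hence the condition
\[
\int_0^{d(Tfx,Tfy)}\phi(t)\,dt \leq \alpha\int_0^{d(Tx,Ty)}\phi(t)\,dt \qquad \forall x,y\in X
\]
is equivalent to $d(Tfx,Tfy)\leq \alpha\, d(Tx,Ty)$ for all $x,y\in X$, which is exactly the defining inequality of a $T$-contraction. Therefore, if $f$ is a $T$-contraction, then $f$ is a $T_{\int\phi}$-contraction with this $\phi$.

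Finally I would invoke Theorem 2.5 verbatim. The remaining hypotheses of Theorem 3.3 — that $(X,d)$ is complete and that $T$ is one-to-one, continuous, and subsequentially convergent — are identical to those of Theorem 2.5. So Theorem 2.5 applies and yields a unique fixed point $a\in X$ for $f$; moreover, when $T$ is in addition sequentially convergent, Theorem 2.5 gives that $\{f^n x_0\}$ converges to $a$ for every $x_0\in X$. This is exactly the conclusion of Theorem 3.3.

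There is essentially no obstacle here: the entire content of the proof is the bookkeeping identity $\int_0^{s}1\,dt=s$, which collapses the integral-type condition to the ordinary $T$-contraction condition, after which Theorem 2.5 does all the work. The only point deserving a word of care is confirming that the constant function $\phi\equiv 1$ genuinely meets every technical requirement imposed on $\phi$ in the statement of Theorem 2.5 (integrability, summability on compacts, positivity of $\int_0^\epsilon\phi$), which is immediate.
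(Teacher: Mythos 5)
Your proof is correct and follows exactly the paper's argument: the paper also derives Theorem 3.3 from Theorem 2.5 by taking $\phi(t)=1$ for all $t\geq 0$. Your version simply spells out the routine verifications that the paper leaves implicit.
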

\begin{proof}
By taking $\phi(t)=1$ for each $t\in [0,+\infty)$ in Theorem 2.5
we can conclude this theorem.\\
\end{proof}
\begin{exam}
Let $X=[1,+\infty)$ with metric induced by $ \Bbb R
:d(x,y)=|x-y|,$ thus, since $X$ is a closed subset of $\Bbb R,$ it
is a complete metric space. we define $T,f:X\longrightarrow X$ by
$Tx=\ln{x}+1$ and $fx=k\sqrt{x}$ such that $k \geq 1$ be a fixed
element of $\Bbb R.$ Obviousely $f$ is not contraction, but $f$ is
$T_{\int1}-contraction$ and $T$ is one-to-one, continuouse and
sequentially convergent. So $f$ has a unique fixed point by
Theorem 2.5.
\end{exam}

The following example is the main example of this paper. In the
following we show that, we can not conclude this example by
Theorem 1.1, Theorem 1.2, Theorem 1.3 (Branciari Theorem) and
Theorem 3.3.

\begin{exam}
Let $X:={\{\frac{1}{n} \ \  | \ \ n\in \Bbb N \}} \bigcup {\{0}\}$
with metric induced by $\Bbb R :d(x,y):=|x-y|$, thus, since $X$ is
a closed subset of $\Bbb R$, it is a complete metric space. We
consider a mapping $f:X\longrightarrow X$ defined by
$$fx= \left\{\begin{array} {ll} \frac {1}{n+3} & ; x=\frac {1}{n},\: n \: is \: odd \\
0 & ; x=0\\
\frac {1}{n-1} & ; x=\frac{1}{n},\: n \: is \: even
\end{array}\right. $$ and defined
$\phi:[0,+\infty)\longrightarrow [0,+\infty)$ by\\
$$ \phi(t)=\left\{\begin{array} {ll} t^{{\frac {1}{t}}-2}[1-\log{t}] &
; t>0\\ 0 & ; t=0 \end{array}\right. $$\\
we have $\int_0^\tau \phi(t) dt=\tau ^{\frac{1}{\tau}}.$\\
By taking $n=2$ and $m=4$, $|f(1/m)-f(1/n)| > |1/m-1/n|$, so $f$
is not contraction and contractive. Hence, we can not conclude
that, $f$ has a fixed point by Theorem 1.1 and Theorem 1.2.\\
 Now we show that we can not use Branciari Theorem for this
example. For $x=1/m$, $y=1/n$ where $m$ and $n$ are even if \\
\\ \\
$$ \int_0^{|fx-fy|}\phi(t) dt \leq \alpha{\int_0^{|x-y|}\phi(t)
dt}$$ \\
then\\
$$|\frac {1}{m-1}-\frac {1}{n-1}|^{\frac{1}{|\frac {1}{m-1}- \frac {1}{n-1}|}} \leq
\alpha{|\frac{1}{m}- \frac {1}{n}|^{\frac {1}{|\frac{1}{m}-\frac
{1}{n}|}}}$$\\

$$ \Rightarrow \hspace{1cm} |\frac {m-n}{(m-1)(n-1)}|^{|\frac {(m-1)(n-1)}{m-n}|}
\leq \alpha{|\frac {m-n}{mn}|^{|\frac {mn}{m-n}|}}$$\\
For $m=4$ and $n=2$ we conclude that $1 < \alpha.$ So we can not
use Branciari Theorem.\\
Now we defined $T:X \longrightarrow X$ by\\
$$ Tx=\left\{\begin{array}{ll} \frac {1}{n-1} &; x=\frac{1}{n},\: n \: is \: even\\
 0 &; x=0 \\ \frac {1}{n+1} &;
x=\frac {1}{n}, \: n \: is \: odd \end{array}\right. $$\\
 Obviously $T$ is one-to-one and sequentially convergent and
 continuouse.\\
 we have
 $$ Tfx=\left\{\begin{array}{ll} \frac {1}{n+2} &; x=\frac{1}{n},\: n \:is \: odd\\ 0 &; x=0 \\ \frac {1}{n}
 &; x=\frac {1}{n},\: n\: is\: even \end{array}\right.$$\\
 \\ \\
 Since $\sup{\frac {|Tfx-Tfy|}{|Tx-Ty}|}=1$, $f$ is not
 $T-contraction$, and so we can not use Theorem 3.3 for this
 example. Now we show that the condition of Theorem 2.5 are holds.
 We show that $f$ is $T_{\int\phi}-contraction $ and \\
$$ \int_0^{|Tfx-Tfy|}\phi(t) dt \leq {\frac{1}{2}\int
_0^{|Tx-Ty|}\phi(t)
 dt} \hspace{0.5cm} for\ \ all\ \ x,y\in X. \hspace{2cm}(2.13)$$\\
\textbf{ Case 1}. Let $x=\frac{1}{m}, y=\frac{1}{n}$ and $m$ and
$n$ are even. Then

\begin{align*}
 &\qquad\qquad \int_0^{|Tfx-Tfy|}\phi(t) dt \leq {\frac{1}{2}\int_0^{|Tx-Ty|}\phi(t)
 dt} \\ &
 \Leftrightarrow\hspace{1cm}|\frac{1}{m}-\frac{1}{n}|^{\frac{1}{|\frac{1}{m}-\frac{1}{n}|}}\leq\frac
 {1}{2}{|\frac{1}{m-1}-\frac{1}{n-1}|^{\frac{1}{|\frac{1}{m-1}-\frac{1}{n-1}|}}} \\
 & \Leftrightarrow \hspace{1cm}|\frac{m-n}{mn}|^{|\frac{mn}{m-n}|}.
 |\frac{(m-1)(n-1)}{m-n}|^{|\frac{(m-1)(n-1)}{m-n}|}
 \leq\frac {1}{2}\\
 & \Leftrightarrow \hspace{1cm}
 |\frac{(m-1)(n-1)}{mn}|^{|\frac{(m-1)(n-1)}{m-n}|}.
 |\frac{(m-n)}{mn}|^{|\frac{(m+n-1)}{m-n}|} \leq\frac {1}{2}
\end{align*}
Obviously the last inequality is holds, because \\
$$|\frac{(m-1)(n-1)}{mn}| \leq 1 \:\: and \:\: |\frac{(m-1)(n-1)}{m-n}| \geq 1$$
and so $$ |\frac{(m-1)(n-1)}{mn}|^{|\frac {(m-1)(n-1)}{m-n}|} \leq
1,$$ and $$|\frac{m-n}{mn}|^{|\frac {m+n-1}{m-n}|} \leq
\frac{1}{2}.$$\\
Therefore for this case (2.13) is holds. \\

\textbf{Case 2}. Let $x=\frac{1}{m} , y=\frac{1}{n}$ and $m$ and
$n$ are odd.\\

\textbf{Case 3}. Let $x=\frac{1}{m} , y=\frac{1}{n}$, $m$ is even
and $n$ is odd.\\
By the same argument in case 1 we conclude that (2.13) for
case 2 and case 3 is holds.\\

\textbf{Case 4}. Let $x=0, y=\frac{1}{n}$ such that $n$ is even.
Then
\begin{align*}
 & \int_0^{|Tfx-Tfy|}\phi(t) dt \leq \frac{1}{2}{\int_0^{|Tx-Ty|}\phi(t)
 dt} \\ & \Leftrightarrow (\frac{1}{n})^{n} \leq
 \frac{1}{2}{(\frac{1}{n-1})^{n-1}} \\ & \Leftrightarrow
 (\frac{1}{n})^{n} (n-1)^{n-1}\leq \frac{1}{2} \\
 & \Leftrightarrow (\frac{n-1}{n})^{n-1}. \frac{1}{n} \leq
 \frac{1}{2}.
\end{align*}
The last inequality is holds, because,
$$ (\frac{n-1}{n})^{n-1} \leq 1 \qquad and \qquad \frac{1}{n} \leq
\frac{1}{2}.$$\\
Therefore (2.13) is true for this case.\\
\textbf{Case 5}. Let $x=0\ \and \ \ y=\frac{1}{n}$ such that $n$
is odd. By the same argument in case 4 we conclude that,
(2.13) is holds for this case.\\
Hence, (2.13) is holds for all $x,y\in X.$ Therefore the condition
of Theorem 2.5 are hold and so $f$ has a unique fixed point.
\end{exam}

Email:

S-Moradi@araku.ac.ir

A-Beiranvand@Arshad.araku.ac.ir

\end{document}